\newtheorem{theorem}{Theorem}[section]
\newtheorem{corollary}[theorem] {Corollary}
\newtheorem{definition}[theorem]{Definition}
\newtheorem{lemma} [theorem]{Lemma}
\newtheorem{proposition}[theorem]{Proposition}
\newtheorem{remark}[theorem]{Remark}
\newtheorem{notation}[theorem]{Notation}
\begin{document}

\markboth{N. K. Sudev and K. A. Germina }
{Some New Results on Strong Integer Additive Set-Indexers of Graphs}

\title{\bf Some New Results on Strong Integer Additive Set-Indexers of Graphs}

\author{N. K. SUDEV}

\affil{Department of Mathematics,\\ Vidya Academy of Science \& Technology,\\Thalakkottukara, Thrissur-680501, Kerala, India.\\
E-mail: sudevnk@gmail.com}

\author{K. A. GERMINA}

\affil{PG \& Research Department of Mathematics,\\ Mary Matha Arts \& Science College,\\ Manathavady, Wayanad-670645, Kerala, India.\\ 
E-mail: srgerminaka@gmail.com}

\maketitle

\begin{abstract}
Let $\mathbb{N}_0$ be the set of all non-negative integers. An integer additive set-indexer of a graph $G$ is an injective function $f:V(G)\to 2^{\mathbb{N}_0}$ such that the induced function $g_f:E(G) \rightarrow 2^{\mathbb{N}_0}$ defined by $f^+(uv) = f(u)+ f(v)$ is also injective. An IASI is said to be {\em $k$-uniform} if $|f^+(e)| = k$ for all $e\in E(G)$. In this paper, we introduce the notions of strong integer additive set-indexers and initiate a study of the graphs which admit strong integer additive set-indexers.
\end{abstract}

\noindent {Keywords: Set-indexers; integer additive set-indexers; strong integer additive set-sndexers; strongly uniform integer additive set-indexers.}

\noindent {Mathematics Subject Classification 2010: 05C78}

\section{Introduction}

For all  terms and definitions, not defined specifically in this paper, we refer to \cite{FH}. Unless mentioned otherwise, all graphs considered here are simple, finite and have no isolated vertices.

The researches in the field of graph labeling commenced with the introduction of the notion of graceful labeling of graphs in \cite{AR}. Analogous to the number valuations of graphs, the notion of set-valuations of graphs was introduced in \cite{A1}. 

Let us first consider the following notions of a set-labeling and a set-indexer of a given graph $G$.

\begin{definition}\textup{
\cite{A1} Let $G$ be a graph and let $X$, $Y$, $Z$ be non-empty sets. Then, the functions $f:V(G)\to 2^X$, $f:E(G)\to 2^Y$ and $f:V(G)\cup E(G)\to 2^Z$ are called the {\em  set-assignments} of vertices, edges and elements of $G$ respectively. By a {\em set-assignment} of a graph, we mean any one of them.  A set-assignment $f$ of a graph $G$ is called a {\em set-labeling} or {\em a set-valuation} of $G$ if it is injective. A graph $G$ with a set-labeling $f$ is denoted by $(G,f)$ and is called a {\em set-labeled graph} or a {\em set-valued graph}.}
\end{definition}

\begin{definition}\label{D0}\textup{
\cite{A1} For a graph $G(V,E)$ and a non-empty set $X$ of cardinality $n$, a {\em set-indexer} of $G$ is defined as an injective set-valued function $f:V(G) \to 2^{X}$ such that the function $f^{\oplus}:E(G)\to 2^{X}-\{\emptyset\}$ defined by $f^{\oplus}(uv) = f(u ){\oplus}f(v)$ for every $uv{\in} E(G)$ is also injective, where $2^{X}$ is the set of all subsets of $X$ and $\oplus$ is the symmetric difference of sets.}
\end{definition}

\begin{definition}\label{D1}\textup{
\cite{MBN} The {\em sum set} of two sets $A, B$, denoted by  $A+B$, is defined by $A + B = \{a+b: a \in A, b \in B\}$}. 
\end{definition}

Note that if either $A$ or $B$ is countably infinite, then their sum set will also be countably infinite. Hence, all sets we consider here are finite sets. We denote the cardinality of a set $A$ by $|A|$. Using the concepts of sum sets of finite sets, the notion of an integer additive set-indexer of a given graph $G$ has been introduced as follows.

\begin{definition}\label{D2}\textup{
\cite{GA} Let $\mathbb{N}_0$ be the set of all non-negative integers and let $2^{\mathbb{N}_0}$ be its power set. Then, an {\em integer additive set-indexer} (IASI, in short) of a given graph $G$ is defined as an injective function $f:V(G)\to 2^{\mathbb{N}_0}$ such that the induced function $f^+:E(G) \to 2^{\mathbb{N}_0}$ defined by $f^+ (uv) = f(u)+ f(v)$ is also injective}.
\end{definition}

\begin{definition}\label{D3}\textup{
\cite{GS1} The cardinality of the labeling set of an element (vertex or edge) of a graph $G$ is called the {\em set-indexing number} of that element.}
\end{definition}

\begin{definition}\label{DU}
\cite{GA} An IASI is said to be {\em $k$-uniform} if $|f^+(e)| = k$ for all $e\in E(G)$. That is, a connected graph $G$ is said to have a $k$-uniform IASI if all of its edges have the same set-indexing number $k$.
\end{definition}

In particular, we say that a graph $G$ has an {\em arbitrarily $k$-uniform IASI} if $G$ has a $k$-uniform IASI  for every positive integer $k$.

In \cite{GS1}, we have proved the following results on weakly uniform and arbitrarily uniform IASIs.

\begin{lemma}\label{L1}
\cite{GS1} Let $A$ and $B$ be two non-empty finite subsets of $\mathbb{N}_0$. Then $\max(|A|,|B|)\le |A+B|\le |A||B|$.
\end{lemma}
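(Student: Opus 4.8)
The plan is to handle the two inequalities separately, both by elementary counting arguments, since $A+B$ is by definition the image of the addition map $(a,b)\mapsto a+b$ restricted to $A\times B$.

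For the upper bound, I would simply observe that every element of $A+B$ arises as $a+b$ for some ordered pair $(a,b)\in A\times B$, so $A+B$ is the image of a function whose domain has cardinality $|A\times B| = |A|\,|B|$. The image of a function cannot be larger than its domain, hence $|A+B|\le |A|\,|B|$. (If one wishes, one can note that equality fails precisely when two distinct pairs have the same sum, i.e. when $a_1+b_1 = a_2+b_2$ with $a_1\neq a_2$.)

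For the lower bound, I would argue by symmetry: without loss of generality assume $|A|\ge |B|$, so that $\max(|A|,|B|)=|A|$. Since $B$ is non-empty, fix some element $b_0\in B$. Then the translate $A+b_0 := \{a+b_0 : a\in A\}$ is a subset of $A+B$, and the translation map $a\mapsto a+b_0$ is injective on $\mathbb{N}_0$, so $|A+b_0| = |A|$. Therefore $|A+B|\ge |A+b_0| = |A| = \max(|A|,|B|)$.

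Combining the two bounds gives $\max(|A|,|B|)\le |A+B|\le |A|\,|B|$, as required. There is no substantial obstacle here; the only point requiring a little care is the reduction by symmetry in the lower bound (choosing to fix an element of the \emph{smaller} set and translate the \emph{larger} one) and the injectivity of translation, both of which are immediate over $\mathbb{N}_0$.
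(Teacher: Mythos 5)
Your argument is correct and complete: the upper bound follows because $A+B$ is the image of the addition map on $A\times B$, and the lower bound follows by translating the larger set by a fixed element of the other, translation being injective on $\mathbb{N}_0$. The paper itself states Lemma~\ref{L1} only as a citation to \cite{GS1} and gives no proof, so there is no in-paper argument to compare against; your proof is the standard one and needs no changes.
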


\begin{remark}\label{R1}\textup{
Due to Lemma \ref{L1}, it is clear that for an integer additive set-indexer $f$ of a graph $G$,  $max(|f(u)|, |f(v)|) \le |f^+(uv)|= |f(u)+f(v)| \le |f(u)| |f(v)|$, where $u,v \in V(G)$.}
\end{remark}

\begin{definition}\textup{
\cite{GS1} An IASI $f$ of a graph $G$ is said to be a {\em weak IASI} if $|f^+(uv)|=\max(|f(u)|,|f(v)|)$ for all $u,v\in V(G)$. }
\end{definition}

The following results are a necessary and sufficient condition for an IASI to be a weak IASI and a weakly uniform IASI respectively, of a given graph $G$.

\begin{lemma}\label{L2a}
\cite{GS1} An IASI $f$ of a graph $G$ is a weak IASI if and only if at least one end vertex of every edge of $G$ has the set-indexing number $1$.
\end{lemma}

\begin{lemma}\label{L2b}
\cite{GS1} An IASI $f$ of a graph $G$ is a weakly $k$-uniform IASI if and only if one end vertex of every edge of $G$ has the set-indexing number $1$ and the other end vertex has the set-indexing number $k$, where $k$ is a positive integer.
\end{lemma}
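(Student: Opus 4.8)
\textbf{Proof proposal for Lemma \ref{L2b}.} The plan is to reduce the statement to Lemma \ref{L2a} together with the cardinality bounds of Lemma \ref{L1} (equivalently Remark \ref{R1}), handling the two implications separately.

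First I would prove the forward direction. Suppose $f$ is a weakly $k$-uniform IASI of $G$. Since a weakly $k$-uniform IASI is in particular a weak IASI, Lemma \ref{L2a} applies, so every edge $uv$ of $G$ has at least one end vertex, say $u$, with set-indexing number $|f(u)| = 1$. Because $f(v)$ is a non-empty finite set, $|f(v)| \ge 1$, and hence $\max(|f(u)|,|f(v)|) = |f(v)|$. The weak IASI condition gives $|f^+(uv)| = \max(|f(u)|,|f(v)|) = |f(v)|$, while $k$-uniformity gives $|f^+(uv)| = k$; combining these yields $|f(v)| = k$. Thus one end vertex of $uv$ has set-indexing number $1$ and the other has set-indexing number $k$. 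Since $uv$ was an arbitrary edge, this holds for every edge of $G$. (When $k = 1$ this simply says both end vertices have set-indexing number $1$, which is consistent.)

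For the converse, suppose every edge $uv$ of $G$ has one end vertex, say $u$, with $|f(u)| = 1$ and the other end vertex $v$ with $|f(v)| = k$. Applying Remark \ref{R1} to the edge $uv$, we get $\max(|f(u)|,|f(v)|) \le |f^+(uv)| \le |f(u)|\,|f(v)|$, that is, $k \le |f^+(uv)| \le 1 \cdot k = k$, forcing $|f^+(uv)| = k$. Since $k = \max(1,k) = \max(|f(u)|,|f(v)|)$, this shows $f$ satisfies the defining condition of a weak IASI on every edge, and since $|f^+(e)| = k$ for all $e \in E(G)$, it is $k$-uniform. Hence $f$ is a weakly $k$-uniform IASI.

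I do not anticipate a genuine obstacle here, since the result follows directly once Lemma \ref{L2a} and the bounds of Lemma \ref{L1} are invoked; the only point requiring a little care is the bookkeeping of the ``max'' computation, where one must use that every vertex label is a non-empty set (so has cardinality at least $1$), and noting that the characterization degenerates gracefully in the case $k = 1$.
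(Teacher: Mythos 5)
Your argument is correct. Note, however, that this lemma is not proved in the present paper at all: it is quoted from the authors' earlier work \cite{GS1} (``On Weakly Uniform Integer Additive Set-Indexers of Graphs''), so there is no in-paper proof to compare against. Your route --- deducing from Lemma \ref{L2a} that every edge has an end vertex with set-indexing number $1$, then forcing the other end to have set-indexing number $k$ from $k$-uniformity, and conversely squeezing $|f^+(uv)|$ between $\max(1,k)=k$ and $1\cdot k=k$ via Lemma \ref{L1} --- is exactly the natural argument one would expect in the cited source, and your handling of the degenerate case $k=1$ and of the fact that all set-labels are non-empty is careful and sufficient. The only stylistic alternative worth mentioning is that once one end vertex is labelled by a singleton $\{a\}$, the sum set $\{a\}+f(v)$ is just a translate of $f(v)$, so $|f^+(uv)|=|f(v)|$ follows directly without invoking the upper bound of Lemma \ref{L1}; this is a shortcut, not a correction.
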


In this paper, we study a special type of integer additive set-indexer of a graph $G$ called a strong integer additive set-indexer of $G$.

\section{Strong Integer Additive Set Indexers}

First note that all the sets we consider during this discussion, are non-empty, finite sets of non-negative integers. In view of Lemma \ref{L1}, the maximum cardinality of the sum set of two non-empty finite sets is the product of the cardinalities of each individual sets. That is, for two non-empty finite sets $A$ and $B$, we have $|A+B|\le |A|\,|B|$. In this context, the study regarding characteristics of graphs in which the set-indexing number of every edge is the product of the set-indexing numbers of its end vertices  arises much interest. Hence, we define

\begin{definition}\textup{
If a graph $G$ has a set-indexer $f$ such that $|f^+(uv)|=|f(u)+f(v)|=|f(u)|\,|f(v)|$ for all vertices $u$ and $v$ of $G$, then $f$ is said to be a {\em strong IASI} of $G$.} 
\end{definition}

\begin{definition}\label{D6}\textup{
If all the vertices of a graph $G$ has the same set-indexing number, say $l$, then $V(G)$ is said to be {\em $l$-uniformly set-indexed}. If $G$ is a graph which admits a $k$-uniform IASI and $V(G)$ is $l$-uniformly set-indexed, then $G$ is said to have a {\em $(k,l)$-completely uniform IASI} or simply a {\em completely uniform IASI}.}
\end{definition}

\begin{lemma}\label{T2}
Let $A$, $B$ be two non-empty distinct subsets of $\mathbb{N}_0$. Then, $|A+B|=|A|\,|B|$ if and only if the differences between any two elements of one set are not equal to any differences between any two elements of the other.
\end{lemma}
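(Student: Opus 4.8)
The plan is to reduce the claim to a simple statement about when the addition map collides, and to read it through its contrapositive. Since Lemma \ref{L1} already gives $|A+B|\le |A|\,|B|$ unconditionally, proving the lemma is the same as proving the negated equivalence: $|A+B| < |A|\,|B|$ if and only if some nonzero number is simultaneously a difference $a_1-a_2$ of two elements of $A$ and a difference $b_2-b_1$ of two elements of $B$. (I will remark along the way that "difference'' is meant as a signed difference, equivalently that the set of differences of a set is symmetric about $0$, so this is the correct way to phrase "a difference of $A$ equals a difference of $B$''; and that the hypothesis $A\ne B$ is not actually used.)

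The key object is the addition map $\phi:A\times B\to\mathbb{N}_0$ given by $\phi(a,b)=a+b$, whose image is exactly $A+B$. Because $A$ and $B$ are finite, $|A\times B|=|A|\,|B|$, and by the pigeonhole principle $\phi$ is injective precisely when $|A+B|=|A|\,|B|$, while $|A+B| < |A|\,|B|$ holds precisely when $\phi$ is not injective. So everything comes down to describing when two distinct pairs have the same image under $\phi$.

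First I would assume $\phi$ is not injective, fixing distinct pairs $(a_1,b_1)\ne(a_2,b_2)$ in $A\times B$ with $a_1+b_1=a_2+b_2$; if $a_1=a_2$ then $b_1=b_2$, contradicting distinctness, so $a_1\ne a_2$ and likewise $b_1\ne b_2$, and rearranging gives $a_1-a_2=b_2-b_1$, a nonzero common difference. Conversely, given $a_1\ne a_2$ in $A$ and $b_1,b_2$ in $B$ with $a_1-a_2=b_2-b_1$ (forcing $b_1\ne b_2$), we get $a_1+b_1=a_2+b_2$ with $(a_1,b_1)\ne(a_2,b_2)$, so $\phi$ is not injective. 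Combining the two directions with Lemma \ref{L1} yields the stated biconditional.

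I do not expect a genuine obstacle here, as the whole argument is a counting observation about the fibres of $\phi$. The only points requiring a little care are the bookkeeping with distinctness — that a true collision of sums forces \emph{both} coordinates to differ, so that the resulting common difference is nonzero — and the interpretation of "difference'' with sign, so that the equation $a_1-a_2=b_2-b_1$ faithfully expresses the condition in the statement.
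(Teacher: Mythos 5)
Your proposal is correct and follows essentially the same route as the paper: both arguments identify failure of $|A+B|=|A|\,|B|$ with a collision $a_i+b_j=a_r+b_s$ and rewrite it as an equality of differences $a_i-a_r=b_s-b_j$. Your version is slightly more careful on the bookkeeping (making explicit that a genuine collision forces both coordinates to differ, so the common difference is nonzero), a point the paper's proof passes over silently.
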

\begin{proof}
Due to Lemma \ref{L1}, we have $|A+B| \le |A|\,|B|$. We have $|A+B| \ne |A||B|$ if and only if $a_i+b_j=a_r+b_s$ for some $a_i, a_r \in A$ and $b_j,b_s \in B$. That is, when $a_i- a_r = b_s-b_j$. Therefore, $|A+B|=|A|\,|B|$ if and only if $a_i- a_r \ne b_s-b_j$ for any $a_i, a_r \in A$ and $b_j,b_s \in B$. Let $D_A = \{d_{ij}: d_{ij} = a_i-a_j; a_i,a_j \in A\}$ and let $D_B = \{d_{rs}: d_{rs} = b_r-b_s; b_r,b_s \in B\}$. Then, it is clear that $|A+B|= |A|\,|B|$ if and only if no element of $D_A$ is equal to an element of $D_B$.
\end{proof}

The set of all positive differences between any two elements of a set $A$ is called the {\em difference set} of $A$ and can be denoted by $D_A$. That is, $D_A=\{a_i-a_j:a_i,a_j\in A\}$. Hence, we propose the following theorem which establishes a necessary and sufficient condition for a complete graph $K_n$ to admit a strong IASI.

\begin{notation}
Let $A$ and $B$ be two non-empty subsets of $\mathbb{N}_0$. We use the notation $A<B$ in the sense that $a\ne b$ for all $a\in A$ and $b\in B$. That is, $A<B \implies A\cap B=\emptyset$. Also, by the sequence $A_1<A_2<A_3,\ldots A_n$, we mean that all the sets $A_i$ are pairwise disjoint. 
\end{notation}

\begin{theorem}\label{TSK}
Let each vertex $v_i$ of the  complete graph $K_n$ be labeled by the set $A_i\in 2^{\mathbb{N}_0}$. Then, $K_n$ admits a strong IASI if and only if there exists a finite sequence of sets $D_1<D_2<D_3<\ldots <D_n$ where each $D_i$ is the difference set of the set-label $A_i$.
\end{theorem}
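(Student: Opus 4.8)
The plan is to observe that, because $K_n$ is complete, the defining condition of a strong IASI must hold simultaneously on \emph{every} pair of vertices: for all $i\ne j$ the edge $v_iv_j$ exists, so a strong IASI of $K_n$ is nothing other than a labeling $v_i\mapsto A_i$ for which $|A_i+A_j|=|A_i|\,|A_j|$ for all $i\ne j$. Theorem \ref{TSK} is then simply Lemma \ref{T2} quantified over all $\binom n2$ pairs, and both implications read off from it. (Throughout, $D_i$ is taken to be the set of \emph{positive} differences of $A_i$, as in the statement, so that the trivial difference $0$ does not interfere with disjointness.)

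For the necessity, assume $f\colon v_i\mapsto A_i$ is a strong IASI of $K_n$ and fix $i\ne j$. Since $v_iv_j\in E(K_n)$, we have $|A_i+A_j|=|A_i|\,|A_j|$, so Lemma \ref{T2} gives $D_i\cap D_j=\emptyset$, i.e. $D_i<D_j$. As $i,j$ were arbitrary, $D_1,D_2,\ldots,D_n$ are pairwise disjoint, which is exactly the chain $D_1<D_2<\cdots<D_n$.

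For the sufficiency, suppose such a chain exists, so $D_i\cap D_j=\emptyset$ whenever $i\ne j$. For each edge $v_iv_j$ of $K_n$, Lemma \ref{T2} now gives $|f^+(v_iv_j)|=|A_i+A_j|=|A_i|\,|A_j|$, so the cardinality condition holds on every edge. It then remains to check that $f$ is a bona fide IASI: injectivity of $f$ on $V(K_n)$ is part of the labeling hypothesis, while injectivity of $f^+$ on $E(K_n)$ has to be argued. Here I would use that $|A_i+A_j|=|A_i|\,|A_j|$ makes every element of $A_i+A_j$ have a \emph{unique} decomposition as $a+b$ with $a\in A_i$, $b\in A_j$; combining this uniqueness with the disjointness of the difference sets (via a fiber/bijection argument on the unique decompositions, splitting into the cases of edges sharing a vertex and edges sharing no vertex) forces $A_i+A_j=A_k+A_l$ to occur only when $\{i,j\}=\{k,l\}$.

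The arithmetic heart of the proof — the edge-by-edge use of Lemma \ref{T2} — is routine. The one point that genuinely needs care is this last step of the sufficiency direction, confirming that $f^+$ remains injective, since the hypothesis on difference sets directly controls only the size of each edge label and not whether two distinct edges could be assigned the same sum set. I expect this to be the main obstacle, albeit a mild one.
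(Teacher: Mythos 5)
Your main argument is the same as the paper's: in both directions you apply Lemma \ref{T2} edge by edge, using that every pair of vertices of $K_n$ is adjacent, so that the product condition on all edges is equivalent to pairwise disjointness of the difference sets $D_1,\dots,D_n$. Up to the paper's more laborious vertex-by-vertex phrasing of the necessity direction, this part of your proposal coincides with the published proof.

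Where you diverge is the injectivity of $f^+$ in the sufficiency direction. The paper does not address it at all: its proof verifies $|f^+(v_iv_j)|=|f(v_i)|\,|f(v_j)|$ on every edge and stops, i.e.\ it implicitly treats the given labeling as already being a set-indexer and checks only the cardinality (\emph{strongness}) condition. You are right that, read literally, a strong IASI must in particular be an IASI, but the repair you sketch cannot be carried out: disjointness of the difference sets together with the product condition does not force distinct edges to receive distinct sum sets. For instance, label the four vertices of $K_4$ by the singletons $\{0\},\{3\},\{1\},\{2\}$; all difference sets are empty, hence pairwise disjoint, and every edge satisfies $|f^+(uv)|=1=|f(u)|\,|f(v)|$, yet $\{0\}+\{3\}=\{3\}=\{1\}+\{2\}$, so $f^+$ is not injective. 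Thus your proposed fiber/bijection argument for the case of vertex-disjoint edges must fail; injectivity of $f^+$ has to be taken as part of the hypothesis (as the paper tacitly does), not derived from the chain $D_1<D_2<\dots<D_n$. With that understanding, your proof is exactly the paper's.
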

\begin{proof}
Assume that $K_n$ admits a strong IASI. Let $\{v_1,v_2,v_3,\ldots, v_n\}$ be the vertex set of $K_n$. Let each vertex $v_i$ is labeled by the set $A_i$ in $2^{\mathbb{N}_0}$. Define the set $D_i$ by $D_i = \{a_{i_r}-a_{i_s}:a_{i_r},a_{i_s}\in A_i\}$ which is the set of all differences between any two elements of $A_i$. Since the vertex $v_1$ is adjacent to $v_2$, without loss of generality, assign the set $A_1$ to $v_1$ and the set $A_2$ to the vertex $v_2$. Then, by Lemma \ref {T2}, we have $D_1<D_2$. Since $v_2$ is adjacent to $v_3$, assign $A_3$ to $v_3$ so that $D_2<D_3$. Combining these two, we get $D_1<D_2<D_3$. Proceeding in this way, for $i<j$, assign the set $A_i$ to $v_i$ and the set $A_j$ to $v_j$ so that $D_i<D_j$. After all possible assignments, we get $D_1<D_2<D_3<\ldots <D_n$.

Conversely, assume that each vertex $v_i$ of the complete graph $K_n$ is labeled by a non-empty set $A_i\in 2^{\mathbb{N}_0}$ such that there exists a finite sequence of sets $D_1<D_2<D_3<\ldots <D_n$ where each $D_i$ is the set of all differences between any two elements of the set $A_i$. Then, for each edge $v_iv_j, i<j$ we have $D_i<D_j$. Hence, by Lemma \ref{T2}, $|f^+(v_iv_j)|=|f(v_i)|\,|f(v_j)|$ for all $v_i,v_j \in V(G), i<j$. Therefore, $G$ admits a strong IASI.
\end{proof}

\begin{theorem}\label{TSS}
If a graph $G$ admits a strong IASI, then its subgraphs also admit strong IASI. 
\end{theorem}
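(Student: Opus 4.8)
The plan is to observe that a strong IASI is inherited by every subgraph simply by restricting the labeling function, because the defining condition is a local one imposed on each edge independently. Let $f:V(G)\to 2^{\mathbb{N}_0}$ be a strong IASI of $G$, and let $H$ be any subgraph of $G$. First I would set $g = f|_{V(H)}$, the restriction of $f$ to the vertex set of $H$. Since $f$ is injective on $V(G)$, its restriction $g$ is injective on $V(H)\subseteq V(G)$, so $g$ is a set-valued function of the required type. Similarly, $g^+(uv) = g(u)+g(v) = f(u)+f(v) = f^+(uv)$ for every edge $uv \in E(H)\subseteq E(G)$, and since $f^+$ is injective on $E(G)$, its restriction to $E(H)$ is injective; hence $g$ is an IASI of $H$.

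It then remains to check that $g$ is in fact a \emph{strong} IASI of $H$. This is immediate: for every edge $uv\in E(H)$, the edge $uv$ also lies in $E(G)$, so the strong condition for $f$ gives $|g^+(uv)| = |f^+(uv)| = |f(u)|\,|f(v)| = |g(u)|\,|g(v)|$. Thus every edge of $H$ has set-indexing number equal to the product of the set-indexing numbers of its end vertices, which is precisely the statement that $g$ is a strong IASI of $H$.

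There is essentially no obstacle here; the only point worth a word of care is the convention (stated in the introduction) that all graphs under consideration are finite, simple, and without isolated vertices. If one insists a subgraph satisfy the same convention, then one should note that removing edges from $G$ may create isolated vertices; in that case one either restricts attention to subgraphs without isolated vertices, or simply deletes any such vertices (the restricted labeling on the remaining vertices is still injective and still satisfies the strong condition on the surviving edges). In either reading, the argument above goes through verbatim, so the main content of the proof is just the remark that the strong-IASI property is hereditary because it is defined edge-by-edge.
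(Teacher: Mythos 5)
Your proposal is correct and follows essentially the same route as the paper: restrict $f$ to $V(H)$, observe that injectivity of $f$ and $f^+$ is inherited by the restrictions, and note that the strong condition $|f^+(uv)|=|f(u)|\,|f(v)|$ holds edge-by-edge on $E(H)\subseteq E(G)$. Your version is in fact slightly more careful than the paper's (which omits the injectivity checks and the remark about isolated vertices), but it is the same argument.
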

\begin{proof}\textup{
Let $G$ be a graph which admits a strong IASI and $H$ be a subgraph of $G$. Let $g$ be the restriction of $f$ to $V(H)$. Then ${g^+}$ is the corresponding restriction of $f^+$ to $E(H)$. Then, clearly $g$ is a set-indexer on $H$. This set-indexer may be called an {\em induced set-indexer} on $H$. Since $|f^+(uv)| = |f(u)|\,|f(v)|$ for all $u,v \in V(G)$, we have $g^+(uv) =|{g}(u)|\,|{g}(v)|$ for all $u,v \in V(H)$. Hence, $H$ has a strong IASI.}
\end{proof}

The following result is a general condition for a general graph to admit a strong IASI.

\begin{corollary}\label{TSC}
A connected graph $G$ (on $n$ vertices) admits a strong IASI if and only if each vertex $v_i$ of $G$ is labeled by a set $A_i$ in $2^{\mathbb{N}_0}$ and there exists a finite sequence of sets $D_1<D_2<D_3< \ldots <D_m$, where $m\le n$ is a positive integer and each $D_i$ is the set of all differences between any two elements of $A_i$. 
\end{corollary}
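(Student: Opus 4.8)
The plan is to mirror the structure of the proof of Theorem \ref{TSK}, but now working with an arbitrary connected graph $G$ on $n$ vertices rather than the complete graph $K_n$. The essential observation is that in $K_n$ every pair of vertices is adjacent, which forced all $n$ difference sets to be pairwise disjoint; in a general connected graph only the difference sets of adjacent vertices are required to be disjoint. The reason the conclusion is still phrased in terms of a single chain $D_1 < D_2 < \ldots < D_m$ is that distinct vertices may carry the \emph{same} set-label (note that an IASI only requires $f$ injective on vertices, but two vertices with the same label would violate injectivity --- so in fact the labels are distinct, and we should read the statement as allowing $m \le n$ because we are free to index the \emph{distinct difference sets}, or because non-adjacent vertices whose labels happen to produce equal difference sets can share a slot in the chain). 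I would therefore phrase the argument around the set of distinct difference sets that actually occur.

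The forward direction: suppose $G$ admits a strong IASI $f$, and for each vertex $v_i$ let $A_i = f(v_i)$ and $D_i$ its difference set. For every edge $v_iv_j \in E(G)$, Lemma \ref{T2} gives $D_i \cap D_j = \emptyset$, i.e. $D_i < D_j$. Now consider the collection $\{D_1, D_2, \ldots, D_n\}$ and identify equal ones; let the distinct difference sets be $E_1, \ldots, E_m$ with $m \le n$. Adjacent vertices have disjoint difference sets, hence in particular distinct ones, so the map sending $v_i$ to its difference set is a proper coloring-like assignment; more to the point, I claim these $m$ sets can be linearly ordered so that $E_1 < E_2 < \ldots < E_m$. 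For this I would argue directly: since $0 \in D_i$ for every nonempty $A_i$ (taking $a_{i_r} = a_{i_s}$), actually every difference set contains $0$, so no two difference sets can be disjoint at all! This forces every $D_i = D_j$ for all adjacent $i,j$, which is impossible unless every edge is ``degenerate.'' This tension is exactly the main obstacle, and resolving it is where I would need to be careful: the intended reading is almost certainly that $D_i$ denotes the set of \emph{positive} differences (as the text says just before Theorem \ref{TSK}: ``the set of all positive differences''), so $0 \notin D_i$ and disjointness is meaningful. I would adopt that convention explicitly at the start of the proof.

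With the positive-difference convention fixed, the forward direction proceeds as in Theorem \ref{TSK}: walk along a spanning tree of $G$ (which exists since $G$ is connected), and at each step the newly encountered vertex's difference set must be disjoint from that of its tree-parent; collecting all the distinct difference sets encountered and ordering them gives the chain $D_1 < D_2 < \ldots < D_m$ with $m \le n$. For the converse, suppose the vertices are labeled by sets $A_i$ whose (positive) difference sets all lie in a pairwise-disjoint chain $D_1 < \ldots < D_m$. Then for any edge $v_iv_j$, the difference sets $D_i$ and $D_j$ are two members of this chain; if they are distinct members they are disjoint by hypothesis, and Lemma \ref{T2} gives $|f^+(v_iv_j)| = |f(v_i)||f(v_j)|$. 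The remaining worry is the case where an edge joins two vertices assigned the \emph{same} difference set; since an IASI requires $f$ injective, if $v_i$ and $v_j$ share a difference set their labels are still distinct, and here I would either strengthen the hypothesis to say adjacent vertices receive distinct members of the chain, or simply note (as the complete-graph case already shows via Theorem \ref{TSS}, since every edge of a connected graph sits inside some subgraph) that the condition as stated is precisely what \ref{TSK} and \ref{TSS} deliver. The cleanest writeup invokes Theorem \ref{TSS}: a connected $G$ admits a strong IASI iff it does so as a subgraph of some suitably labeled complete graph, and Theorem \ref{TSK} translates that into the chain condition. I expect the bookkeeping around ``$m \le n$'' versus ``$m = n$'' and the positive-versus-signed difference sets to be the only genuinely delicate points; everything else is a routine application of Lemma \ref{T2} and Theorem \ref{TSK}.
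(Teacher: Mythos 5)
Your fallback route --- embed $G$ in $K_n$, invoke Theorem \ref{TSS} to restrict a strong IASI of $K_n$ to $G$, and let Theorem \ref{TSK} supply the chain of difference sets --- is exactly the paper's own proof: the paper notes that every graph on $n$ vertices is a subgraph of $K_n$, that by Theorem \ref{TSS} a strong IASI of $K_n$ induces one on $G$, and that the corresponding subsequence of the difference-set chain of $K_n$ then serves for $G$. Your observation about positive differences also agrees with the paper's convention (the difference set is defined via positive differences immediately before Theorem \ref{TSK}), so the ``$0\in D_i$'' scare resolves the way you suspected.

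The genuine gap is in your direct argument for the forward direction. A strong IASI of $G$, via Lemma \ref{T2}, only forces the difference sets of \emph{adjacent} vertices to be disjoint; your spanning-tree walk gives even less (disjointness of each vertex's difference set from that of its tree-parent), and neither yields what the chain $D_1<D_2<\ldots<D_m$ asserts, namely pairwise disjointness of \emph{all} the listed difference sets. Two non-adjacent vertices may carry labels whose difference sets overlap without coinciding: on the path $uvw$, label $u,v,w$ by $\{0,1\}$, $\{0,3\}$, $\{0,1,2\}$ respectively; this is a strong IASI (edge set-indexing numbers $4$ and $6$), but the difference sets $\{1\}$, $\{3\}$, $\{1,2\}$ cannot be arranged into a pairwise disjoint chain, so ``collecting the distinct difference sets and ordering them'' does not produce the required sequence. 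The chain can only be obtained by choosing the vertex labels as restrictions of a suitable labeling of $K_n$, i.e., by the subgraph route --- which is what the paper does. You should also be aware (and your hedging suggests you sensed this) that this route really establishes the sufficiency of the chain condition plus the fact that $K_n$-induced labelings satisfy it; neither your sketch nor the paper shows that \emph{every} strong IASI of a connected $G$ arises as such a restriction, and the example above shows that it need not.
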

\begin{proof}\textup{
Note that every graph on $n$ vertices is a subgraph of $K_n$. Then, by Theorem \ref{TSS}, a strong IASI defined on $K_n$ induces a strong IASI on $G$. Therefore, a finite subsequence of the sequence of difference sets of $K_n$ will be the sequence of difference sets of $G$. This complete the proof.}
\end{proof}

In view of Theorem \ref{TSS}, an interesting question we need to answer is whether a strong IASI of a graph $G$ induces to some other graph structures formed from $G$ by removing or identifying some of its elements (vertices or edges). First, recall the following notion. 
 
\begin{definition}\textup{
\cite{KDJ} Let $G$ be a connected graph and let $v$ be a vertex of $G$ with $d(v)=2$. Then, $v$ is adjacent to two vertices $u$ and $w$ in $G$. If $u$ and $w$ are non-adjacent vertices in $G$, then delete $v$ from $G$ and add the edge $uw$ to $G-\{v\}$. This operation is known as an {\em elementary topological reduction} on $G$. If $H$ is a graph obtained from a graph $G$ after a finite number of elementary topological reductions, then we say that $G$ and $H$ are {\em homeomorphic} graphs.}
\end{definition}

The following theorem establishes a necessary and sufficient condition for a graph homeomorphic to a given strong IASI graph $G$ to admit a strong IASI.

\begin{theorem}
Let $G$ be a graph which admits a strong IASI. Then, any graph $H$, obtained by applying a finite number of elementary topological reductions on $G$, admits a strong IASI if and only if there exist the same number of paths $P_2$, not the part of any triangle in $G$, such that the difference sets of the set-labels of their vertices are pairwise disjoint. 
\end{theorem}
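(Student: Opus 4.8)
The plan is to test $H$ with the labelling it inherits from a fixed strong IASI $f$ of $G$, namely the restriction of $f$ to $V(H)$ (the same kind of restriction labelling used in Theorem~\ref{TSS}); since a strong IASI chosen independently on $H$ would say nothing about $G$, this is the labelling the statement must intend. For completeness I would note that this costs nothing: a suppressed degree-$2$ vertex can always be re-labelled by a set whose difference set avoids any prescribed finite set of integers, so an arbitrary strong IASI of $H$ lifts to one of $G$ in which the relevant paths have pairwise disjoint difference sets. I would first treat the case in which no two suppressed vertices are adjacent, and then reduce the general case to it.

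First I would record the elementary fact that an elementary topological reduction does not change the degree of any vertex it does not delete. Hence each of the $k$ vertices suppressed in passing from $G$ to $H$ is a degree-$2$ vertex $v_i$ of $G$ whose two neighbours $u_i,w_i$ are non-adjacent in $G$, so the path $u_i-v_i-w_i$ is a copy of $P_2$ in $G$ that is not part of a triangle. When the $v_i$ are pairwise non-adjacent we have $V(H)=V(G)\setminus\{v_1,\dots,v_k\}$ and $E(H)=\bigl(E(G)\setminus\bigcup_i\{u_iv_i,\,v_iw_i\}\bigr)\cup\bigcup_i\{u_iw_i\}$, so every edge of $H$ already present in $G$ automatically meets the strong-IASI requirement. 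Therefore the induced labelling is a strong IASI of $H$ precisely when each of the $k$ new edges $u_iw_i$ does, i.e.\ when $|f(u_i)+f(w_i)|=|f(u_i)|\,|f(w_i)|$ for every $i$, which by Lemma~\ref{T2} is equivalent to $D_{u_i}\cap D_{w_i}=\emptyset$ for all $i$.

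The second step is to convert the condition $D_{u_i}\cap D_{w_i}=\emptyset$ into a statement about the path $u_i-v_i-w_i$ itself. Since $u_iv_i$ and $v_iw_i$ are edges of the strong-IASI graph $G$, Lemma~\ref{T2} already gives $D_{u_i}\cap D_{v_i}=\emptyset=D_{v_i}\cap D_{w_i}$; hence $D_{u_i}\cap D_{w_i}=\emptyset$ holds if and only if the three difference sets $D_{u_i},D_{v_i},D_{w_i}$ of the vertices of this $P_2$ are pairwise disjoint. Reading this equivalence across all $i$ settles both implications at once: the induced labelling is a strong IASI of $H$ if and only if, for each of the $k$ reductions, the associated copy of $P_2$ in $G$ (which, being reducible, is not part of a triangle) has pairwise disjoint vertex difference sets, and the number of such paths is exactly the number of reductions. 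Running this forward gives the ``only if'' part and backward gives the ``if'' part.

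The step I expect to be the main obstacle is the case in which some suppressed vertices are adjacent, so the reductions cannot be performed simultaneously. I would handle it by induction on $k$, applying the single-reduction analysis above to the strong-IASI graph produced at the previous stage; its strongness is itself the conclusion of that analysis, once the corresponding reduction has been checked to be valid. The delicate point is purely one of bookkeeping: a maximal chain $u-v_{i_1}-\cdots-v_{i_j}-w$ of consecutive suppressed degree-$2$ vertices collapses to a single new edge $uw$, so the ``path $P_2$'' whose vertices must have pairwise disjoint difference sets has to be read in the appropriate intermediate graph rather than literally in $G$ (equivalently, one tracks the maximal degree-$2$ chains of $G$). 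With this understood, the same disjointness argument with Lemma~\ref{T2} at each step completes the proof.
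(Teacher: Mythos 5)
Your proposal follows essentially the same route as the paper's proof: both test $H$ with the restriction of the strong IASI $f$ of $G$ and use Lemma~\ref{T2} to translate the strong condition on each new edge $uw$ into $D_u\cap D_w=\emptyset$, which, combined with the disjointness of $D_u,D_v$ and $D_v,D_w$ already forced by the edges $uv,vw$ of $G$, is exactly the pairwise-disjointness condition on the reduced paths $P_2$. The only difference is one of completeness rather than method: the paper argues a single reduction and leaves the iteration implicit, while you spell out the induction over successive reductions and the bookkeeping for chains of adjacent suppressed vertices.
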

\begin{proof}
Let $G$ be a graph which admits a strong IASI. Let $v$ be a vertex of $G$ with $d(v)=2$. Then $v$ is adjacent two vertices $u$ and $w$ in $G$. Then we have $|f^+(uv)|=|f(u)|\,|f(v)|$ and $|f^+(vw)|=|f(v)|\,|f(w)|$. Let $A_u$, $A_v$ and $A_w$ be the labeling sets of $u,v,w$ respectively. Also let $D_u, D_v, D_w$ be the corresponding sets of all differences between the elements of $A_u,A_v,A_w$ respectively. 

Assume that $G'=(G-v)\cup\{uw\}$ admits a strong IASI. Then, $|f^+(uw)|=|f(u)|\,|f(w)|$. Therefore, by \ref{T2}, the sets all differences of the set-labels of $u$ and $w$ are pairwise disjoint in $G'$. That is, $D_u<D_w$ in $G'$. Hence, the path $uvw$ in $G$ whose vertices have the set-labels with the difference sets of these vertices are pairwise disjoint.

Conversely, assume that the difference sets of the set-labels of whose vertices are pairwise disjoint.That is, $D_u<D_v<D_w$. Now, delete $v$ from $G$. Let $G'=(G-v)\cup \{uw\}$. Here, we have $D_u<D_w$. Then, by Lemma \ref{T2}, $|f^+(uw)|=|f(u)|\,|f(w)|$ in $G'$. All other vertices and edges are the same in $G'$ and $G$. Therefore, $G'$ admits a strong IASI. This completes the proof.
\end{proof}

Figure \ref{GHom} depicts the existence of strong IASIs of the graphs $H_1$, and $H_2$ which are obtained by applying finite number of elementary topological reductions on $G$.

\begin{figure}[h!]
	\centering
	\includegraphics[width=0.5\linewidth]{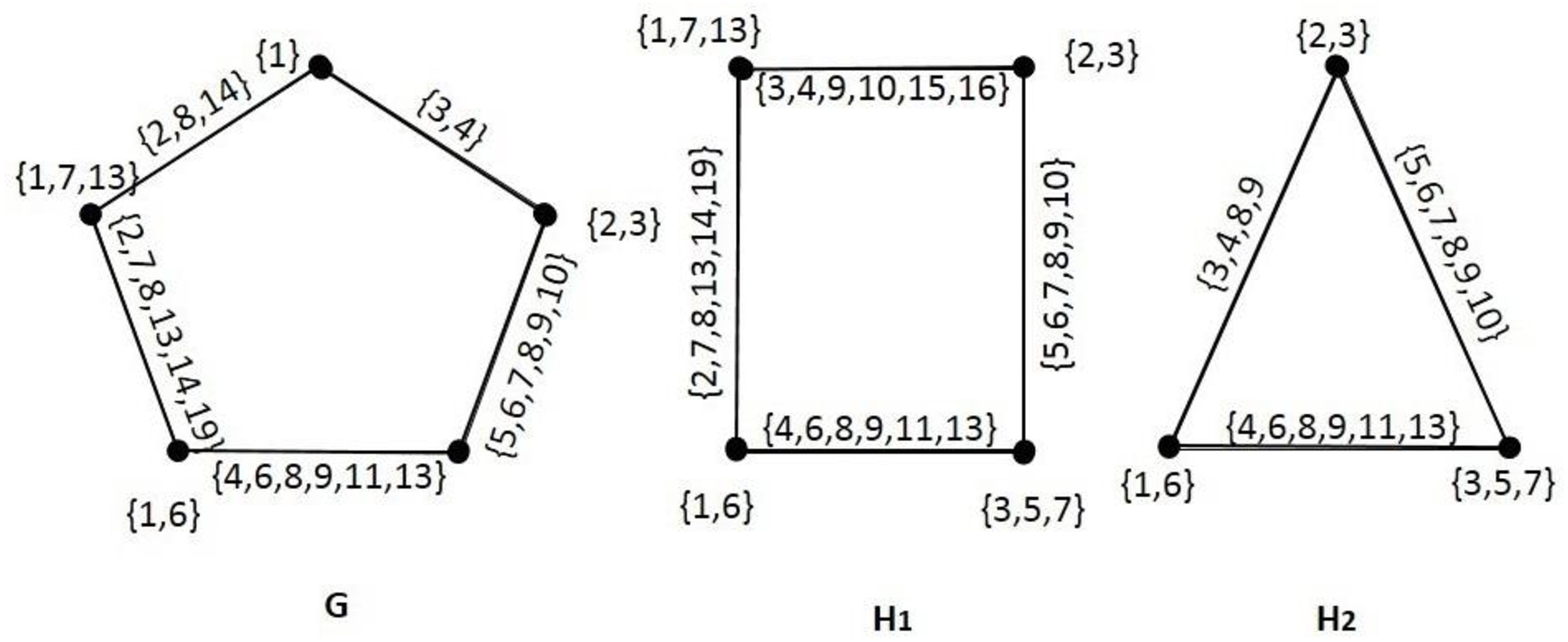}
	\caption{}
	\label{fig:GHom}
\end{figure}

In the following section, we are going to discuss about the strongly uniform integer additive set-indexers of given graphs. 

\section{Strongly Uniform Integer Additive Set-Indexers}

\begin{definition}\textup{
A graph $G$ is said to have a strongly uniform IASI if there exists an IASI $f$ defined on $G$ such that $|f^+(uv)|=k=|f(u)|\,|f(v)|$ for all $u,v \in V(G)$, where $k$ is a positive integer. }
\end{definition}

In other words, we can say that a strong integer additive set-indexer $f$ of a graph $G$ is said to be a {\em strongly $k$-uniform integer additive set-indexer} if $|f^+(e)|= k$ for all $e\in E(G)$, where $k$ is a positive integer.  In view of the above definition, we note the following.

\begin{remark}\label{R6}\textup{
If a graph $G$ admits a strongly $k$-uniform IASI, then every edge of $G$ has the set-indexing number which is the product of the set-indexing numbers of its end vertices. Equivalently, if $G$ admits a strongly $k$-uniform IASI, the set-indexing number of a vertex of $G$ is a divisor of the set-indexing number of an edge of $G$ which incidents on it.}
\end{remark}

\begin{remark} \label{R7}\textup{
Let $G$ be a graph which admits a strongly $k$-uniform IASI. Let $n$ be the number of divisors of $k$. Then by Remark \ref{R6},  each vertex of $G$  has some set-indexing number $d_i$, which is a divisor of $k$. Hence, $V(G)$ can be partitioned into at most $n$ sets, say $(X_1, X_2, \ldots, X_n)$ where each $X_i$ consists of the vertices of $G$ having the set-indexing number $d_i$.}
\end{remark}

Let us now proceed to discuss the admissibility of strongly uniform IASIs by certain graphs and conditions required for their existence for various graphs. The following theorem establishes the admissibility of strongly uniform IASIs by bipartite graphs.

\begin{theorem}\label{TBS}
For any positive integer $k$, all bipartite graphs admit a strongly $k$-uniform IASI.
\end{theorem}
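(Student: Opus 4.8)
The plan is to exploit the bipartition directly: if $G$ has parts $V_1$ and $V_2$, then every edge has one end in $V_1$ and one in $V_2$, so it suffices to label all vertices of $V_1$ with sets whose difference sets are pairwise disjoint \emph{from} those used on $V_2$, while allowing difference sets to repeat \emph{within} each part. Concretely, fix a factorization $k = p\cdot q$ (for instance $p=1$, $q=k$, which is always available). I would assign to every vertex in $V_1$ a set $A$ with $|A| = p$ and to every vertex in $V_2$ a set $B$ with $|B| = q$, chosen so that $D_A$ (the difference set of any label on $V_1$) and $D_B$ (the difference set of any label on $V_2$) satisfy $D_A \cap D_B = \emptyset$. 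By Lemma~\ref{T2}, this forces $|f(u)+f(v)| = |f(u)|\,|f(v)| = pq = k$ for every edge $uv$, which is exactly a strongly $k$-uniform IASI.

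The key steps, in order, are: (1) choose the arithmetic progression trick to build the labels — take all of $V_1$'s labels to be translates of a fixed set $S_1 = \{0,1,2,\ldots,p-1\}$, whose difference set is $\{-(p-1),\ldots,p-1\}$, and all of $V_2$'s labels to be translates of $S_2 = \{0,p,2p,\ldots,(q-1)p\}$, whose difference set consists of multiples of $p$ of absolute value at most $(q-1)p$; (2) check that these two difference sets intersect only in $0$ — which is \emph{not} good enough, since $0$ lies in both, so I would instead use $S_2 = \{0, M, 2M, \ldots, (q-1)M\}$ for a large modulus $M > p-1$, making $D_{S_2}$ contain no nonzero element of $D_{S_1}$; (3) ensure \emph{injectivity} of $f$ on vertices and of $f^+$ on edges by translating the individual labels by distinct large offsets, which does not change the difference sets; (4) conclude via Lemma~\ref{T2} and Corollary~\ref{TSC} (or a direct edge-by-edge check) that the resulting IASI is strongly $k$-uniform.

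The main obstacle is handling the element $0$, which necessarily belongs to the difference set of \emph{every} set, so the naive reading of "$D_A < D_B$" can never hold literally; the fix is to observe that Lemma~\ref{T2}'s condition "$a_i - a_r \ne b_s - b_j$ for any choices" only needs to be checked for the \emph{nonzero} differences, since $a_i - a_r = 0 = b_s - b_j$ forces $a_i = a_r$ and $b_s = b_j$, giving the same edge label, not a collision. So in practice the disjointness must be imposed only on the positive difference sets, and with $M$ chosen larger than the diameter of $S_1$ this is immediate. A secondary, purely bookkeeping obstacle is simultaneously guaranteeing the two injectivity requirements of Definition~\ref{D2}; this is routine because translating label sets by sufficiently separated integers leaves all cardinalities and difference sets untouched while separating the actual sets. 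Once these two points are dispatched, the theorem follows for every $k$ by taking $p=1$, $q=k$ as the simplest valid factorization.
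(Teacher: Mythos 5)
Your construction is essentially the paper's own: the paper likewise labels one part with integer intervals $\{i,i+1,\ldots,i+(m-1)\}$ and the other with arithmetic progressions $\{j,j+m,\ldots,j+(n-1)m\}$ of common difference $m$ (your case $M=p$), so that each edge's sum set has exactly $mn=k$ elements, and your verification via the difference-set criterion of Lemma~\ref{T2} rather than the paper's explicit computation of the sum set as a full interval is only a cosmetic difference. The proposal is correct, and in fact you are more careful than the paper on the two points it glosses over --- the zero difference in Lemma~\ref{T2} and the injectivity of $f$ and $f^+$ required by Definition~\ref{D2}, which you repair with well-separated (pairwise-sum-distinct) offsets.
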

\begin{proof}\textup{
Let $G$ be a bipartite graph. Let $(X, Y)$ be the bipartition of $V(G)$. Our aim is to develop a set-indexer for $G$ so that each edge of it has a set-indexing number $k$. We label the vertices of $X$ and $Y$ in the following way.  Let $m$ and $n$ be two positive integers such that $mn=k$. Now assign the set $\{i,i+1,i+2, \cdots i+(m-1)\}$ to the vertex $u_i$ in $X$. Let $v_j$ be a vertex in $Y$ which is adjacent to $u_i$. Now assign the set $\{j, j+m, j+2m,\cdots j+(n-1)m\}$ to the vertex $v_j$. Now the edge $u_iv_j$ has the set-indexer $\{i+j, i+j+m, i+j+2m, \cdots i+j+(n-1)m, i+1+j, i+1+j+m, i+1+j+2m, \cdots i+1+j+(n-1)m, \cdots i+(m-1)+j, i+(m-1)+j+m, \cdots i+(m-1)+j+(n-1)m\}$. That is,  the set-indexer of $u_iv_j$ is $\{i+j,i+j+1,i+j+2, \cdots, i+j+(m-1), i+j+m, i+j+m+1, i+j+m+2, \cdots i+j+2m-1, i+j+2m,i+j+2m+1, \cdots, 1+j+(n-1)m, \cdots, i+j+mn-1\}$.
Therefore, the set-indexing number of $u_iv_j$ is $mn=k$. Since $u_i$ and $v_j$ are arbitrary elements of $X$ and $Y$ respectively, the above argument can be extended to all edges in $G$. That is, all edges of $G$ can be assigned by a $k$-element set. Therefore, $G$ admits a strongly $k$-uniform IASI.}
\end{proof}

The following result is a special case when the number $k$, we consider above is a prime number.

\begin{corollary}\label{CBP}
If $p$ is a prime number, then a strongly $p$-uniform IASI of a bipartite graph $G$ is also a weakly $p$-uniform IASI for $G$.
\end{corollary}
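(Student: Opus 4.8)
The plan is to exploit the primality of $p$ to pin down the possible set-indexing numbers of the vertices. Suppose $f$ is a strongly $p$-uniform IASI of the bipartite graph $G$. By definition, for every edge $uv\in E(G)$ we have $|f(u)|\,|f(v)|=|f^+(uv)|=p$. Since $p$ is prime, the only way to write $p$ as a product of two positive integers is $p=1\cdot p$. Hence, for each edge $uv$, exactly one of $|f(u)|,|f(v)|$ equals $1$ and the other equals $p$.

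Next I would invoke the characterization of weakly $k$-uniform IASIs. Lemma \ref{L2b} states that an IASI $f$ of $G$ is weakly $k$-uniform if and only if one end vertex of every edge has set-indexing number $1$ and the other has set-indexing number $k$. The previous step verifies exactly this hypothesis with $k=p$, so $f$ is a weakly $p$-uniform IASI of $G$. If one prefers to argue directly rather than cite Lemma \ref{L2b}, note that for every edge $uv$ we now have $\max(|f(u)|,|f(v)|)=\max(1,p)=p=|f^+(uv)|$, which is precisely the defining condition of a weak IASI, and since this common value is $p$ on every edge, $f$ is weakly $p$-uniform.

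There is essentially no obstacle here: the statement is a direct consequence of the factorization structure of a prime together with Lemma \ref{L2b}. The only point that deserves a word of care is confirming that the strong condition $|f^+(uv)|=|f(u)|\,|f(v)|$ forces the factorization $p=1\cdot p$ at \emph{every} edge simultaneously, so that no edge is left with both endpoints of set-indexing number greater than $1$; but this is immediate since $p$ admits no nontrivial factorization. It is worth remarking that the bipartiteness hypothesis is not actually needed in the argument — indeed, the partition of $V(G)$ into vertices of set-indexing number $1$ and vertices of set-indexing number $p$ is automatically a bipartition, so any graph admitting a strongly $p$-uniform IASI is necessarily bipartite.
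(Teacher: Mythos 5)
Your proof is correct and follows essentially the same route as the paper: primality of $p$ forces the factorization $p=1\cdot p$ for the set-indexing numbers of the end vertices of every edge, and Lemma \ref{L2b} then yields weak $p$-uniformity. If anything, your version is slightly cleaner, since you argue directly from the definition of a strong IASI for an arbitrary strongly $p$-uniform $f$, whereas the paper phrases this step through the particular labeling constructed in Theorem \ref{TBS}; your closing remark that bipartiteness is automatic agrees with the paper's later Corollary \ref{CSB}.
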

\begin{proof}\textup{
Let $f$ be a strongly $p$-uniform IASI of a given graph $G$, where $p$ be a prime integer. Then, $p$ has only two divisors $1$ and $p$. By Theorem \ref{TBS}, the given graph $G$ has a strongly $p$-uniform IASI if $m=1$ and $n=p$ (or $m=p$ and $n=1$). Hence, by Lemma \ref{L2b}, this IASI $f$ is also a weakly $p$-uniform IASI of $G$.}
\end{proof}

Invoking Theorem \ref{TBS} and Corollary \ref{CBP}, we have the following result.

\begin{corollary}\label{CBC}
A bipartite graph has distinct weakly $k$-uniform IASI and strongly $k$-uniform IASI if and only if $k$ is a positive composite integer.
\end{corollary}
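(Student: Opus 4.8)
The plan is to settle both implications by exploiting a one-sided containment between the two families of labelings, reading ``$G$ has \emph{distinct} weakly $k$-uniform and strongly $k$-uniform IASIs'' as the statement that the class of strongly $k$-uniform IASIs of $G$ strictly contains the class of weakly $k$-uniform IASIs of $G$. First I would record that containment: if $f$ is a weakly $k$-uniform IASI of $G$, then by Lemma \ref{L2b} every edge $uv$ has one end vertex with set-indexing number $1$ and the other with set-indexing number $k$, so $|f^+(uv)|=k=1\cdot k=|f(u)|\,|f(v)|$, i.e. $f$ is strongly $k$-uniform. Moreover, for a bipartite $G$ both families are non-empty: Theorem \ref{TBS} with the factorization $k=1\cdot k$ delivers a $k$-uniform IASI in which one end of each edge is $1$-indexed, which is weakly $k$-uniform by Lemma \ref{L2b}. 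Thus the whole question is whether the containment is strict.

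For the ``if'' direction, suppose $k$ is a positive composite integer, so $k=mn$ with $m,n\ge 2$. Running the construction in the proof of Theorem \ref{TBS} with this factorization yields a strongly $k$-uniform IASI $f$ of $G$ in which every vertex has set-indexing number $m$ or $n$, both strictly greater than $1$. Then no edge of $G$ has a $1$-indexed end vertex, so by Lemma \ref{L2a} $f$ is not a weak IASI. Hence the class of strongly $k$-uniform IASIs of $G$ properly contains the class of weakly $k$-uniform IASIs, which is exactly the assertion that $G$ has distinct weakly and strongly $k$-uniform IASIs.

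For the ``only if'' direction I would argue by contrapositive: assume $k$ is not a positive composite integer, so $k=1$ or $k$ is prime. If $k$ is prime, Corollary \ref{CBP} supplies the reverse containment --- every strongly $k$-uniform IASI of $G$ is weakly $k$-uniform --- so the two classes coincide. If $k=1$, then for any IASI $f$ of $G$ with all edges of set-indexing number $1$, Remark \ref{R1} gives $1\le |f(u)|\le |f^+(uv)|=1$ for every vertex $u$, so every vertex is $1$-indexed too; such an $f$ is simultaneously weakly and strongly $1$-uniform, and again the two classes coincide. In either case the containment from the first paragraph is an equality, so $G$ does not have distinct weakly $k$-uniform and strongly $k$-uniform IASIs, which proves the contrapositive.

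I expect the only real obstacle to be the conceptual one: fixing the correct reading of ``distinct'' and observing that the inclusion between the two classes of labelings is automatic in one direction, after which the proof is a short assembly of Lemmas \ref{L2a}--\ref{L2b}, Remark \ref{R1}, Theorem \ref{TBS} and Corollary \ref{CBP}. The only routine point left is to confirm that the labeling produced by the Theorem \ref{TBS} recipe is genuinely injective on vertices and on edges when $m,n\ge 2$, which is a matter of spreading the index parameters sufficiently far apart.
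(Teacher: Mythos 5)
Your proposal is correct and takes essentially the same route as the paper: the ``if'' direction via the Theorem \ref{TBS} construction with a nontrivial factorization $k=mn$, $m,n\ge 2$ (so no vertex is $1$-indexed and Lemma \ref{L2a} rules out weakness), and the ``only if'' direction via Corollary \ref{CBP}. You are in fact a bit more careful than the paper, since you make the automatic containment of weakly $k$-uniform inside strongly $k$-uniform explicit and also dispose of the case $k=1$, which the paper's converse (which only excludes primes) passes over.
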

\begin{proof}\textup{
By Theorem \ref{TBS}, it is clear that every bipartite graph admits a strongly $k$-uniform IASI. If $k$ is a composite number, we can find two integers $m$ and $n$ such that $m \ne n\ne 1$ and $mn=k$. Then by Theorem \ref{TBS}, the given graph $G$ admits a strongly $k$-uniform IASI where $k=mn$. Conversely, let $G$ has distinct weakly and strongly $k$-uniform IASIs. Then by Corollary \ref{CBP}, $p$ is not a prime number. Therefore, $k$ is a positive composite integer.}
\end{proof}

Figure \ref{SU-Graphs} illustrates some strong uniform IASIs for bipartite graphs. It may be noted that the second graph in Figure \ref{SU-Graphs} is a 4-uniform IASI as well as a $(4,2)$-completely uniform IASI.

\begin{figure}[h!]
\centering
\includegraphics[width=0.5\linewidth]{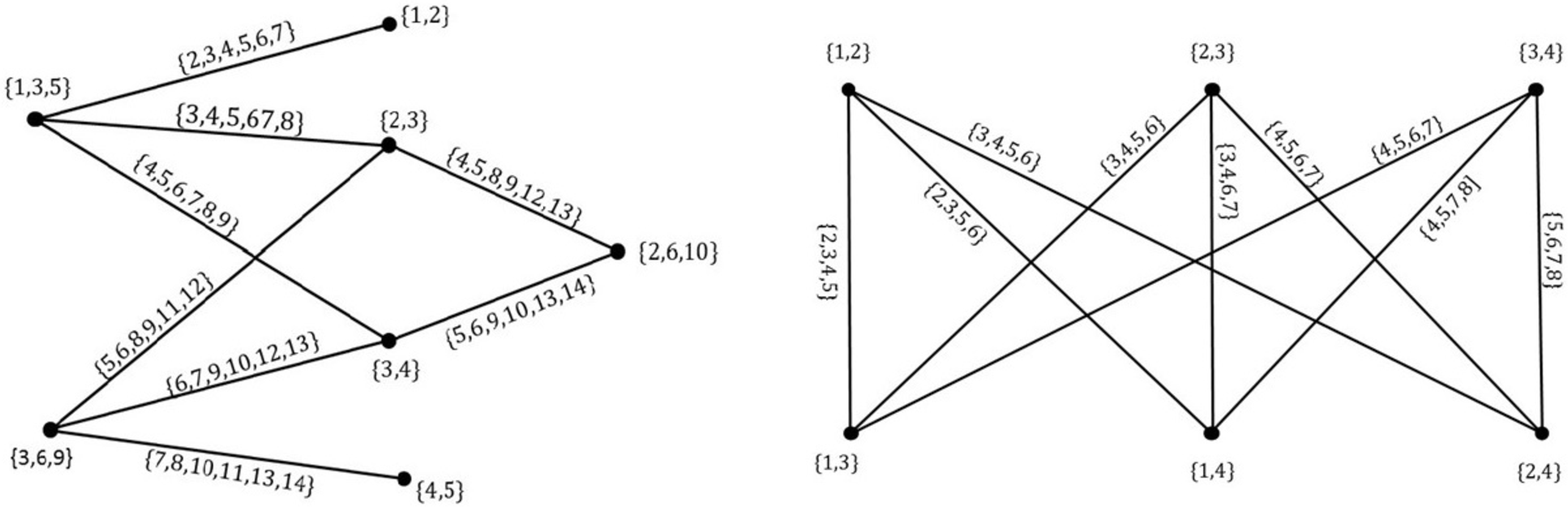}
\caption{}
\label{fig:SU-Graphs}
\end{figure}

The following theorem discusses a condition required for a complete graph to admit a strongly $k$-uniform IASI.

\begin{theorem}\label{TCG}
A strongly $k$- uniform IASI of a complete graph $K_n$  is a $(k,l)$-completely uniform IASI, where $l=\sqrt{k}$.
\end{theorem}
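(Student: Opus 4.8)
The plan is to exploit the fact that in $K_n$ every two vertices are adjacent, so the strong, $k$-uniform condition forces a whole system of multiplicative equations among the set-indexing numbers of the vertices. Throughout I would assume $n\ge 3$; this is really needed, since $K_2$ is a single edge on which one may take $|f(u)|=2$, $|f(v)|=3$, $k=6$, giving a strongly $6$-uniform IASI that is not completely uniform. So I would state (or at least read) the theorem for $n\ge 3$ and handle that case.

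Let $f$ be a strongly $k$-uniform IASI of $K_n$ and write $a_i=|f(v_i)|\ge 1$ for the set-indexing number of the vertex $v_i$. First I would pick any three distinct vertices $v_i,v_j,v_r$. Since $K_n$ is complete, the edges $v_iv_j$, $v_jv_r$ and $v_iv_r$ all belong to $E(K_n)$, so by Remark \ref{R6} (equivalently, by the definition of a strong IASI together with $k$-uniformity) we get
$$a_ia_j=a_ja_r=a_ia_r=k.$$
From $a_ia_j=a_ja_r$ and $a_j\ge 1$ we cancel $a_j$ to obtain $a_i=a_r$. Running this over every triple of vertices shows that all the $a_i$ are equal, say to a common value $l$. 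Hence $V(K_n)$ is $l$-uniformly set-indexed in the sense of Definition \ref{D6}.

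Substituting back into $a_ia_j=k$ then gives $l^2=k$, i.e.\ $l=\sqrt{k}$; in particular this records the side remark that $K_n$ (for $n\ge 3$) can admit a strongly $k$-uniform IASI only when $k$ is a perfect square. Finally, since by hypothesis every edge of $K_n$ already has set-indexing number $k$, the IASI $f$ is $k$-uniform, and we have just shown $V(K_n)$ is $\sqrt{k}$-uniformly set-indexed; by Definition \ref{D6} this is exactly a $(k,\sqrt{k})$-completely uniform IASI, which is what we wanted.

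The argument is short, and there is no computational obstacle; the only point that requires care — and the only place where the hypothesis matters — is the cancellation step, which genuinely needs a third vertex. Hence the sole thing I would be vigilant about is the implicit restriction $n\ge 3$ (and noting the $K_2$ counterexample), after which everything follows immediately from the completeness of $K_n$.
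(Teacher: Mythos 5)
Your proof is correct and takes essentially the same route as the paper's, which simply asserts that the completeness of $K_n$ forces all vertices to share a common set-indexing number $l$ with $l^2=k$; you supply the explicit three-vertex cancellation argument ($a_ia_j=a_ja_r=k$ gives $a_i=a_r$) that the paper leaves implicit. Your caveat that this genuinely requires $n\ge 3$, with the $K_2$ example $|f(u)|=2$, $|f(v)|=3$, $k=6$, is a correct observation that the paper's own proof overlooks.
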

\begin{proof}
Since all distinct pairs of vertices are adjacent  in $K_n$,  $K_n$ admits a strongly $k$- uniform IASI when the set-indexing number of all vertices of $K_n$ are equal, say $l$. Therefore, $K_n$ admits $(k,l)$-completely uniform IASI. Here $l^2=k$.
\end{proof}

To proceed further, we need the following results from the number theory.

\begin{proposition}\label{R8}
\cite{TMA} If $m$ a non-square integer, then  for every divisor $d$ of $m$ there exists another divisor $\frac{m}{d}$ for $m$. That is, the number of divisors of a non-square integer is even.
\end{proposition}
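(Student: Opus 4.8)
The plan is to exhibit the divisors of $m$ as a disjoint union of two-element pairs, which immediately forces their number to be even. Throughout, ``divisor'' means positive divisor and $m$ is a positive non-square integer.

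First I would define the map $\varphi$ on the set $\mathcal{D}(m)$ of all divisors of $m$ by $\varphi(d)=\frac{m}{d}$. The first step is to check that $\varphi$ is well-defined, i.e.\ that $\frac{m}{d}$ is again a (positive) divisor of $m$ whenever $d\mid m$: writing $m=d\cdot k$ with $k\in\mathbb{N}$, we have $k=\frac{m}{d}$ and $k\mid m$, so $\varphi(d)\in\mathcal{D}(m)$. Next I would observe that $\varphi$ is an involution: $\varphi(\varphi(d))=\frac{m}{m/d}=d$. In particular $\varphi$ is a bijection of $\mathcal{D}(m)$ onto itself.

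The crucial step is to show that $\varphi$ has no fixed point. Suppose $\varphi(d)=d$ for some divisor $d$; then $\frac{m}{d}=d$, so $m=d^2$, contradicting the hypothesis that $m$ is not a perfect square. Hence for every $d\in\mathcal{D}(m)$ the element $\varphi(d)$ is a divisor of $m$ distinct from $d$, which is exactly the assertion ``for every divisor $d$ of $m$ there exists another divisor $\frac{m}{d}$''.

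Finally, since $\varphi$ is a fixed-point-free involution, the orbits $\{d,\varphi(d)\}$ all have cardinality exactly $2$ and partition $\mathcal{D}(m)$; therefore $|\mathcal{D}(m)|$ is a sum of $2$'s, i.e.\ even. This completes the proof. I do not anticipate any real obstacle here: the only point needing care is the fixed-point argument, and even that reduces directly to the defining property of a non-square integer; the rest is routine verification that $d\mapsto m/d$ is a well-defined involution on $\mathcal{D}(m)$.
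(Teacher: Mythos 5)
Your argument is correct and complete: the map $d\mapsto m/d$ is a well-defined involution on the set of positive divisors of $m$, and the non-square hypothesis is exactly what rules out a fixed point $d=m/d$, so the divisors split into disjoint pairs $\{d,m/d\}$ and their number is even. The paper itself offers no proof of this proposition --- it is simply quoted from Apostol's number theory text --- so there is nothing to compare against; your pairing argument is the standard proof of the cited fact and fills that gap correctly.
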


\begin{proposition}\label{R9}
\cite{TMA} If $m$ a perfect square integer, then  for every divisor $d$ of $m$ there exists another divisor $\frac{m}{d}$ for $m$ except for the divisor $d_i = \sqrt{k}$. That is, the number of divisors of a  perfect square is odd.
\end{proposition}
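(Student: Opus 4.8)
The plan is to prove this (and, in the same stroke, Proposition \ref{R8}) by means of a pairing on the set of divisors. Write $D(m)=\{d\in\mathbb{N}: d\mid m\}$ for the set of positive divisors of $m$, and consider the map $\varphi:D(m)\to D(m)$ given by $\varphi(d)=m/d$. First I would check that $\varphi$ is well defined: if $d\mid m$, then $m=dd'$ for some $d'\in\mathbb{N}$, so $d'=m/d$ is again a positive divisor of $m$. Next, $\varphi$ is an involution, since $\varphi(\varphi(d))=m/(m/d)=d$; in particular $\varphi$ is a bijection of $D(m)$ onto itself.

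The next step is to locate the fixed points of $\varphi$. We have $\varphi(d)=d$ if and only if $m/d=d$, that is, $d^2=m$, equivalently $d=\sqrt{m}$. When $m$ is a perfect square, $\sqrt{m}$ is a positive integer that clearly divides $m$, so $\varphi$ has exactly one fixed point, namely $d=\sqrt{m}$ --- precisely the divisor $d_i$ singled out in the statement. Every other divisor $d$ satisfies $d\neq m/d$, so the orbits of $\varphi$ acting on $D(m)\setminus\{\sqrt{m}\}$ all have size two, each of the form $\{d,\,m/d\}$.

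Finally I would count: $D(m)$ is the disjoint union of the singleton $\{\sqrt{m}\}$ with a family of two-element orbits, whence $|D(m)|=1+2t$ for some non-negative integer $t$, i.e.\ the number of divisors of a perfect square is odd. (The identical argument yields Proposition \ref{R8}: if $m$ is not a perfect square, $\varphi$ has no fixed point, every orbit has size two, and $|D(m)|=2t$ is even.) I do not expect a genuine obstacle here; the only points deserving a moment's care are verifying that $\sqrt{m}$ is an honest integer divisor of $m$ --- which is exactly where the perfect-square hypothesis is used --- and that it is the \emph{unique} fixed point, which is what prevents any double counting in the orbit decomposition.
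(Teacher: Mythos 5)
Your proof is correct: the involution $\varphi(d)=m/d$ on the set of divisors, with $\sqrt{m}$ as its unique fixed point in the perfect-square case, is exactly the standard argument, and it also disposes of Proposition~\ref{R8} as you note. The paper itself gives no proof here --- it cites the result from Apostol --- so there is nothing to compare against; your argument is the one the cited reference uses in essence. (Minor remark: the $\sqrt{k}$ in the statement is a typo for $\sqrt{m}$, which your write-up silently and correctly fixes.)
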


In view of Proposition \ref{R8} and Proposition \ref{R9}, we have the following theorem.

\begin{theorem}\label{M3}
Let $G$ be a graph which admits a strongly $k$-uniform IASI. Also, let $n$ be the number of divisors of $k$. Then if $k$ is a non-square integer, then $G$ has at most  $\frac{n}{2}$ bipartite components and if  $k$ is a perfect square integer, then $G$ has at most  of  $\frac{n+1}{2}$ components in which at most $\frac{n-1}{2}$ components are bipartite components.
\end {theorem}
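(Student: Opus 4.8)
The plan is to turn the multiplicative constraint $|f(u)|\,|f(v)|=k$ into a coarse block structure on $G$. First I would fix a strongly $k$-uniform IASI $f$ and, as in Remark \ref{R7}, write $V(G)=X_1\cup X_2\cup\cdots\cup X_n$, where $d_1,d_2,\dots,d_n$ are the divisors of $k$ and $X_i$ is the set of vertices with set-indexing number $d_i$ (some $X_i$ may be empty). The one fact doing all the work is: if $uv\in E(G)$ and $u\in X_i$, then $|f(u)|\,|f(v)|=|f^+(uv)|=k$ forces $v\in X_j$ with $d_id_j=k$. Hence every edge of $G$ runs between the two classes of a complementary pair $\{d_i,d_j\}$ of divisors, and no edge ever joins classes belonging to two different complementary pairs; in particular each connected component of $G$ lies inside a single block $X_i\cup X_j$ with $d_id_j=k$.

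Next I would run the involution $d\mapsto k/d$ on the divisor set, invoking Proposition \ref{R8} and Proposition \ref{R9}. If $k$ is a non-square, this involution is fixed-point-free, so the $n$ divisors split into $n/2$ complementary pairs, each with $d_i\ne d_j$; the subgraph induced on such a block $X_i\cup X_j$ has all its edges between the disjoint parts $X_i$ and $X_j$, hence is bipartite, and since every component sits in one of these $n/2$ blocks we get at most $n/2$ bipartite components. If $k=l^2$ is a perfect square, the involution fixes only $l$, so besides the $(n-1)/2$ complementary pairs --- each giving a bipartite block exactly as before --- there is the extra block $X_l$, inside which an edge joins two vertices of set-indexing number $l$ and so bipartiteness is no longer forced (a triangle with all set-labels of size $l$, which exists by Theorem \ref{TSK} together with Theorem \ref{TCG}, is a witness). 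This yields at most $(n-1)/2+1=(n+1)/2$ blocks, of which the $(n-1)/2$ paired ones are bipartite and the single block $X_l$ is the only one not guaranteed to be so.

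The main obstacle here is expository rather than mathematical: the blocks $X_i\cup X_j$ need not each be connected, so ``component'' in the statement has to be understood as a block in this canonical vertex decomposition (every genuine connected component lies in exactly one block). I would therefore phrase the argument as ``$G$ decomposes into at most $n/2$, respectively $(n+1)/2$, vertex-disjoint blocks, bipartite in the indicated number of cases,'' and deduce the claim about components from this confinement. Minor points I would verify along the way: an empty part does not affect bipartiteness of a block; in the square case the middle block $X_l$ may or may not be bipartite, which is exactly why only $(n-1)/2$ of the blocks are certified bipartite; and a disjoint union of suitably labelled bipartite graphs, one per complementary pair (using Theorem \ref{TBS}), shows the bounds are attained.
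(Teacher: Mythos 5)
Your argument is essentially the paper's own: partition $V(G)$ into the divisor classes $X_1,\ldots,X_n$ as in Remark \ref{R7}, observe that every edge must join the two classes of a complementary pair $d_id_j=k$ (with the class $X_{\sqrt{k}}$ as the only one allowing internal edges), and count the pairs using Propositions \ref{R8} and \ref{R9}. The only difference is that you explicitly note the blocks $X_i\cup X_j$ need not be connected and recast the count accordingly, a subtlety the paper's proof passes over by identifying each block with a component; your reading is the more careful one, but the decomposition and the counting are the same.
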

\begin{proof}
Let $G$ be the graph which admits a strongly $k$-uniform IASI. Also let $k=d_1d_2d_3 ....... d_n$.

\noindent {\bf Case 1:} Let $k$ be a non-square integer. Then, by Result \ref{R8}, the number of divisors $n$ of $k$ is even. By Remark \ref{R7}, $V(G)$ can be partitioned into at most $n$ sets. Let this partition be $(X_1,X_2,X_3, ....., X_n)$, where each $X_i$ is the set of vertices having set-indexing number $d_i$. Since $G$ admits a maximal $k$-uniform IASI, each vertex of $X_i$ is adjacent to some vertices of $X_j$ so that $d_i.d_j=k$. Also, no two vertices of $X_i$ and no two vertices of $X_j$ are adjacent and no two vertices of both the sets can not be adjacent to the vertices of any other set $X_k$ of $G$. That is, $(X_i,X_j)$ is a component of $G$ which is bipartite. We can find at most $\frac{n}{2}$ such bipartite components $(X_i, X_j)$ of $G$.

\noindent{\bf Case 2:} Let $k$ be a perfect square. Then, by Result \ref{R9}, the number of divisors $n$ of $k$ is odd. As in Case 1, for each $X_i$ of $V(G)$, we can find a set $X_j$   such that $(X_i, X_j)$ is a  bipartite component of $G$ except for the set $X_r$ whose corresponding divisor $d_r=\sqrt{k}$. But no vertex of $X_r$ can be adjacent to a vertex of any other set $X_i, i\ne r$. Hence, the induced subgraph $[X_r]$ of $G$ is a component of $G$ such that the vertices in $X_r$ are adjacent to some vertices of itself. Therefore, $G$ has at most  $\frac{n-1}{2}$ bipartite components and the component $[X_r]$. That is, $G$ has at most $\frac{n+1}{2}$ components of which $\frac{n-1}{2}$ are bipartite components.
\end{proof}

\begin{remark}\textup{
From Theorem \ref{M3}, if the vertex set of a graph $G$ admitting a strongly $k$-uniform IASI, is partitioned into more than two sets, then $G$ is a disconnected graph.}
\end{remark}

\begin{corollary}\label{CSB}
Let $k$ be a non-square integer. Then a graph $G$ admits a strongly k-uniform IASI if and only if $G$ is bipartite or a union of disjoint bipartite components.
\end{corollary}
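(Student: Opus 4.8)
The plan is to prove both directions by invoking Theorem~\ref{M3} together with the structural observation recorded in the remark immediately preceding this corollary. First I would establish the "if" direction, which is essentially immediate: if $G$ is bipartite, then Theorem~\ref{TBS} already guarantees that $G$ admits a strongly $k$-uniform IASI for \emph{every} positive integer $k$, in particular for our non-square $k$; and if $G$ is a disjoint union of bipartite components, then apply Theorem~\ref{TBS} to each component separately and take the union of the resulting labelings, being careful to shift the labels on the successive components so that the set-labels across the whole graph remain injective and the induced edge-labels remain injective (this shifting does not change any set-indexing number, so strong $k$-uniformity is preserved component by component).

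For the "only if" direction, suppose $G$ admits a strongly $k$-uniform IASI with $k$ non-square. Let $n$ be the number of divisors of $k$; by Proposition~\ref{R8}, $n$ is even. By Remark~\ref{R7}, $V(G)$ partitions into the classes $X_1,\dots,X_n$ according to the set-indexing number $d_i$ of each vertex, and by Remark~\ref{R6} an edge joining a vertex of $X_i$ to a vertex of $X_j$ forces $d_i d_j = k$. Since $k$ is non-square, $d_i \neq d_j$ whenever $d_i d_j = k$; hence no edge of $G$ has both end vertices in the same class $X_i$, i.e. each $X_i$ is an independent set. Case~1 of the proof of Theorem~\ref{M3} then shows that each nonempty $X_i$ together with its "partner" class $X_{j}$ (where $d_j = k/d_i$) forms a bipartite component of $G$, and these are the only components. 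Therefore $G$ is a disjoint union of bipartite graphs; if there is only one such component, $G$ is itself bipartite.

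The only genuinely delicate point is the bookkeeping in the "if" direction when $G$ is disconnected: one must verify that the labelings of the individual bipartite components, each produced by Theorem~\ref{TBS}, can be combined into a single IASI of $G$. This is handled by a standard translation argument --- if component $C_t$ uses set-labels contained in $\{0,1,\dots,N_t\}$, relabel the vertices of $C_t$ by adding a large constant $M_t$ chosen so that the label-intervals used by distinct components are pairwise disjoint; translation by a constant preserves all cardinalities of sum sets, so every edge still has set-indexing number $k$, and the global injectivity of $f$ and of $f^+$ follows from the disjointness of the blocks. I would state this translation step explicitly but not belabour the elementary verification. With that in place, both implications are complete.
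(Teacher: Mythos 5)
Your argument is correct, and for the ``only if'' direction it is essentially the paper's own proof: partition $V(G)$ by set-indexing numbers (Remark~\ref{R7}), use Proposition~\ref{R8} to get an even number of divisor classes, observe via Remark~\ref{R6} that an edge between classes forces $d_id_j=k$ and that non-squareness of $k$ rules out edges inside a class, and then invoke Case~1 of Theorem~\ref{M3} to conclude every component is bipartite. Where you go beyond the paper is the converse: the paper's proof of this corollary addresses only the forward implication and leaves the ``if'' direction implicit in Theorem~\ref{TBS}, whereas you prove it explicitly, including a translation (label-shifting) argument to glue the component labelings of a disconnected graph into one injective IASI. That extra care is sound --- shifting by a constant preserves all sum-set cardinalities, and taking the shifts large enough separates both vertex labels and edge labels across components --- though it is slightly more than is strictly needed, since a disjoint union of bipartite graphs is itself bipartite, so Theorem~\ref{TBS} (which is stated for all bipartite graphs, not just connected ones) applies to $G$ in one stroke. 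Your one small imprecision, shared with the paper, is calling each pair $(X_i,X_{k/d_i})$ ``a component'': it may split into several connected components, each of which is bipartite, which is all the statement requires.
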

\begin{proof}\textup{
Let $k$ be a non-square integer and let the graph admits a strongly $k$-uniform IASI. Then by Remark \ref{R7}, $V(G)$ can be partitioned into at most $n$ sets, where $n$ is even, and $G$ has at most $\frac{n}{2}$ bipartite components. If $n=2$, $G$ has one bipartite component and hence $G$ is a bipartite graph. If $n>2$, then by Theorem \ref{CSB}, $G$ has more than one bipartite component. Hence, $G$ is a union of bipartite components.}
\end{proof}

Recall that a {\em clique} of a graph $G$  is a maximal complete subgraph of $G$.

\begin{corollary}\label{M4}
If a graph $G$, which admits a strongly $k$-uniform IASI, then it contains at most one component which is a clique.
\end{corollary}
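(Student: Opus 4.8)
The plan is to combine the rigidity of strongly $k$-uniform IASIs on complete graphs (Theorem \ref{TCG}) with the component decomposition of Theorem \ref{M3}. Suppose $G$ admits a strongly $k$-uniform IASI $f$ and that $H$ is a component of $G$ which is a clique; since $G$ has no isolated vertices, $H\cong K_m$ with $m\ge 2$, and in fact the interesting case is $m\ge 3$. First I would restrict: by Theorem \ref{TSS} the restriction of $f$ to $V(H)$ is a strong IASI of $H$, and since every edge of $H$ is an edge of $G$ it still has set-indexing number $k$, so $H$ itself admits a strongly $k$-uniform IASI. For $m\ge 3$, Theorem \ref{TCG} then forces this IASI to be $(k,\sqrt{k})$-completely uniform; in particular $k$ is a perfect square and \emph{every} vertex of $H$ has set-indexing number $\sqrt{k}$.

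Next I would locate all such vertices inside a single block of the divisor partition. Let $n$ be the number of divisors of $k$ and let $(X_1,X_2,\ldots ,X_n)$ be the partition of $V(G)$ by set-indexing number as in Remark \ref{R7}. By Proposition \ref{R9} exactly one divisor of $k$ equals $\sqrt{k}$, so there is a unique block $X_r$ consisting of the vertices with set-indexing number $\sqrt{k}$, and by the previous paragraph every vertex of every clique-component of order at least three lies in $X_r$. Now I would invoke Case 2 of the proof of Theorem \ref{M3}: a vertex of $X_r$ cannot be adjacent to a vertex of any $X_i$ with $i\ne r$, because $d_rd_i=\sqrt{k}\,d_i=k$ would force $d_i=\sqrt{k}=d_r$; hence $[X_r]$ is a single component of $G$. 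Since two distinct clique-components would both have to be contained in this one component $[X_r]$, they must coincide, and therefore $G$ has at most one component which is a clique.

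The step I expect to be the genuine obstacle is the order-two case. A component equal to $K_2$ has a single edge whose only constraint is $|f(u)|\,|f(v)|=k$, which does \emph{not} force $|f(u)|=|f(v)|=\sqrt{k}$; such a component can have its two ends in two different blocks $X_i,X_j$ with $d_id_j=k$, $d_i\ne d_j$, and it slips past the argument above. To close this I would either read ``clique'' here as ``complete subgraph on at least three vertices'' (the natural reading in this context), or restrict to non-square $k$ and use Corollary \ref{CSB}, noting that a union of bipartite components contains no $K_3$, so the only possible clique-components are copies of $K_2$, which then require a separate, direct argument on the families of edge-labels. I would also double-check that the assertion ``$[X_r]$ is a component'' borrowed from Theorem \ref{M3} is airtight, since that is precisely the step doing the real work in the conclusion.
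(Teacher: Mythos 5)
Your route is essentially the paper's own: the published proof of this corollary is a short appeal to Theorem \ref{M3} --- for non-square $k$ every component is bipartite, and for square $k$ only the block $X_r$ with $d_r=\sqrt{k}$ can contain a component with adjacencies inside a single part --- and your argument reconstructs exactly that, with the extra (and welcome) substep of restricting $f$ to a clique component via Theorem \ref{TSS} and using Theorem \ref{TCG} to force every vertex of that component into $X_r$. So there is no real divergence of method, only more detail and more honesty on your side about where the weight of the argument sits.

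Both reservations you record are genuine, and they afflict the paper's proof as much as yours. First, the $K_2$ point: for a non-square $k$, say $k=6$, a disjoint union of edges $u_iv_i$ with $|f(u_i)|=2$ and $|f(v_i)|=3$ is strongly $6$-uniform, and each such component is a maximal complete subgraph, so ``clique'' must indeed be read as a complete component on at least three vertices; the paper's proof for the non-square case (``all components are bipartite, hence at most one clique'') silently overlooks this. Second, and more seriously, the borrowed assertion that $[X_r]$ is a single component is false in general: label the two triangles of $K_3\cup K_3$ by $\{0,1\},\{0,2\},\{0,4\}$ and $\{0,8\},\{0,16\},\{0,32\}$; the difference sets are pairwise disjoint, so every edge has set-indexing number $4$, the vertex and edge labels are all distinct, and one obtains a strongly $4$-uniform IASI of a graph with two components, each a clique. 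Hence the step you singled out as ``doing the real work'' cannot be repaired without an additional hypothesis such as connectedness (compare Theorem \ref{TNB}); your proof, like the paper's, does not close as stated, but you located the precise point of failure, which the paper's one-line argument does not acknowledge.
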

\begin{proof}\textup{
From Theorem \ref{M3}, it is clear that all components of $G$ are bipartite  if $k$ is a non-square integer. If $k$ is a perfect square, then there exists exactly one component of $G$  in which some vertices are adjacent to some other vertices of itself. Hence, $G$ can contain at most one component that is clique.}
\end{proof}

\begin{corollary}
Let the graph $G$ has a strongly $k$-uniform IASI. Hence, if $G$ has a component which is a clique, then $k$ is a perfect square.
\end{corollary}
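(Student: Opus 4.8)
The statement to prove is essentially the contrapositive of Corollary~\ref{M4}'s main dichotomy, so the plan is to leverage Theorem~\ref{M3} directly. First I would recall what it means for $G$ to have a component that is a clique: such a component contains an edge $uv$ where both $u$ and $v$ have set-indexing numbers exceeding $1$ in a nontrivial way, and more to the point, the component is a complete graph on at least two vertices in which the vertex partition classes $X_i$ cannot be ``spread across'' two distinct divisor classes $d_i \neq d_j$ with $d_i d_j = k$.

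The key step is to argue by contradiction: suppose $G$ admits a strongly $k$-uniform IASI and has a component $C$ which is a clique, but $k$ is \emph{not} a perfect square. Then by Proposition~\ref{R8} the number of divisors $n$ of $k$ is even, and by Case~1 of the proof of Theorem~\ref{M3}, every component of $G$ is bipartite. But a clique on $n \geq 2$ vertices that is bipartite must be $K_1$ or $K_2$; since $K_2$ is bipartite, the only genuinely problematic case is $K_n$ for $n \geq 3$, which is not bipartite. I would then need to address the degenerate cases $K_1$ and $K_2$: $K_1$ has no edges (excluded by the paper's standing assumption of no isolated vertices), and $K_2$ is itself bipartite, so if one wants ``clique component'' to mean a clique that is \emph{not} bipartite (equivalently, $K_n$ with $n \geq 3$, which is the only interesting reading consistent with Corollary~\ref{M4}), then the contradiction with Theorem~\ref{M3} Case~1 is immediate, forcing $k$ to be a perfect square.

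Concretely, the proof would read: assume $G$ has a strongly $k$-uniform IASI and a clique component $C$ with $|V(C)| \geq 3$. If $k$ were non-square, Theorem~\ref{M3} (Case~1) would force all components of $G$, including $C$, to be bipartite, contradicting that $K_n$ for $n \geq 3$ contains an odd cycle (a triangle). Hence $k$ must be a perfect square. One could also phrase it via the vertex partition: in $C$, since every pair of vertices is adjacent and $C$ is a single component, all vertices must carry the \emph{same} set-indexing number $l$ (otherwise two classes $X_i, X_j$ with $d_i \neq d_j$ would both be forced into $C$, but then $d_i d_j = k$ means vertices within $X_i$ are pairwise nonadjacent, contradicting that $C$ is complete); therefore $l \cdot l = k$, i.e., $k = l^2$, echoing Theorem~\ref{TCG}.

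The main obstacle here is not mathematical depth but rather pinning down the intended meaning of ``a component which is a clique'' so that the statement is actually true --- under the most literal reading $K_2$ is a clique and is bipartite, which would make the corollary false as stated. I would resolve this by adopting the convention (implicit in Corollary~\ref{M4} and the surrounding discussion, which contrasts ``bipartite'' with ``clique'') that a clique component means one containing a triangle, i.e., $K_n$ with $n \geq 3$; with that understood, the argument is a one-line appeal to Theorem~\ref{M3}. The remaining routine verification is simply that $K_n$ for $n \geq 3$ is non-bipartite, which I would not belabor.

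\begin{proof}\textup{
Suppose $G$ admits a strongly $k$-uniform IASI and contains a component $C$ which is a clique, say $C \cong K_r$ with $r \geq 3$. Assume, to the contrary, that $k$ is not a perfect square. Then, by Proposition~\ref{R8}, the number $n$ of divisors of $k$ is even, and by Case~1 in the proof of Theorem~\ref{M3}, every component of $G$ is bipartite. In particular $C$ is bipartite. But a complete graph $K_r$ with $r \geq 3$ contains a triangle and hence an odd cycle, so it cannot be bipartite, a contradiction. Therefore $k$ is a perfect square.}
\end{proof}
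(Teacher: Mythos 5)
Your proof is correct and takes essentially the same route as the paper: both argue by contraposition from Theorem \ref{M3}, observing that a non-square $k$ would force every component of $G$ to be bipartite, while a clique component is not bipartite. Your explicit treatment of the degenerate case $K_2$ (a clique that \emph{is} bipartite, which the corollary as literally stated does not exclude) is a point the paper silently glosses over, but it does not alter the substance of the argument.
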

\begin{proof}\textup{
Let the graph $G$ admits a strongly $k$-uniform IASI. From Theorem \ref{M3}, if $k$ is a non-square integer, all the components of $G$ are bipartite. Then, if one component of $G$ is a clique, it can not be bipartite. Therefore, $k$  can not be a non-square integer. Hence, $k$ is a perfect square.}
\end{proof}

In view of the results discussed above, we establish the following theorem.

\begin{theorem}\label{TNB}
A connected non-bipartite graph $G$ admits a strongly $k$-uniform IASI if and only if $k$ is a perfect square and this IASI is a $(k,l)$-completely uniform, where $l=\sqrt{k}$.
\end{theorem}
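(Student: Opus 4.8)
The plan is to prove both implications by leaning on the structural results already established, chiefly Theorem~\ref{M3}, Corollary~\ref{M4} and Theorem~\ref{TCG}.

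First I would handle the "if" direction. Suppose $k$ is a perfect square, say $k=l^2$ with $l=\sqrt{k}$. I want to show a connected non-bipartite $G$ admits a strongly $k$-uniform IASI that is $(k,l)$-completely uniform. The idea is to label every vertex so that it has set-indexing number $l$, choosing the $l$-element sets so that consecutive difference sets along edges are disjoint. Concretely, pick an ordering $v_1,v_2,\ldots,v_n$ of $V(G)$ and assign $v_i$ a set $A_i$ whose difference set $D_i$ lies in a "slot" disjoint from the slots used by its neighbours; since all vertices get sets of the same size $l$, by Lemma~\ref{T2} every edge $v_iv_j$ then gets $|f^+(v_iv_j)| = |A_i||A_j| = l^2 = k$. (This is exactly the mechanism behind Theorem~\ref{TCG}, applied now to an arbitrary connected graph rather than $K_n$; one can even embed $G$ in $K_n$ and restrict a strong IASI of $K_n$ as in Theorem~\ref{TSS}, noting that the completely uniform labelling of $K_n$ from Theorem~\ref{TCG} restricts to a completely uniform labelling of $G$.) Hence $G$ admits a strongly $k$-uniform IASI, and by construction it is $(k,l)$-completely uniform.

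Next the "only if" direction. Suppose the connected non-bipartite graph $G$ admits a strongly $k$-uniform IASI. By Theorem~\ref{M3}, if $k$ were a non-square integer then every component of $G$ would be bipartite; since $G$ is connected this would force $G$ itself to be bipartite, contradicting the hypothesis. Therefore $k$ is a perfect square. It remains to argue the IASI is completely uniform with $l=\sqrt{k}$: by Theorem~\ref{M3} (Case~2), the only component that is not bipartite is the induced subgraph $[X_r]$ on the vertices whose set-indexing number is the self-paired divisor $d_r=\sqrt{k}$; since $G$ is connected and non-bipartite, $G = [X_r]$, so every vertex of $G$ has set-indexing number $\sqrt{k}$. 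Thus $V(G)$ is $\sqrt{k}$-uniformly set-indexed and, with every edge of set-indexing number $k$, the IASI is $(k,\sqrt{k})$-completely uniform.

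The main obstacle I anticipate is the "if" direction's construction step: one must exhibit $n$ blocks of $l$-element subsets of $\mathbb{N}_0$ whose difference sets are mutually "far apart" (pairwise disjoint in the sense of Lemma~\ref{T2}) so that the adjacency constraints of $G$ are met. This is essentially a packing/spacing argument and is routine once one allows the sets to be spread out over $\mathbb{N}_0$ — e.g. give $v_i$ the arithmetic-progression-type set used in Theorem~\ref{TBS}'s construction, scaled so that block $i$ occupies integers in a range disjoint from the ranges of its neighbours — but it should be stated carefully rather than merely asserted. Everything else is a direct appeal to the already-proven theorems.
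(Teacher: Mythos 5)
Your ``only if'' direction is essentially the paper's own argument: non-square $k$ forces every component to be bipartite (Theorem~\ref{M3}, Corollary~\ref{CSB}), so connectedness plus non-bipartiteness makes $k$ a perfect square, and Case~2 of Theorem~\ref{M3} then puts every vertex in the class $X_r$ with $d_r=\sqrt{k}$, i.e. the IASI is $(k,\sqrt{k})$-completely uniform. Where you diverge is the converse. The paper treats ``this IASI is $(k,l)$-completely uniform with $l=\sqrt{k}$'' as the hypothesis and simply observes that then $|f^+(uv)|=k=l^2=|f(u)|\,|f(v)|$ on every edge, so the given IASI is automatically strong; no construction is needed. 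You instead aim at the stronger existence statement that every connected non-bipartite $G$ admits such a labelling whenever $k$ is a square. That is a legitimate and more informative route, but two points need attention. First, Theorem~\ref{TCG} is a structural statement about an already-given strongly $k$-uniform IASI of $K_n$, not an existence result, so citing it (or Theorem~\ref{TSS} applied to an unconstructed IASI of $K_n$) does not discharge the existence claim. Second, the spacing construction you defer is the entire content of that direction and must be written out; it does work, e.g. label $v_i$ with an $l$-term arithmetic progression of common difference $l^{i}$ translated by a rapidly increasing base point, so the difference sets $\{m\,l^{i}: 1\le m\le l-1\}$ are pairwise disjoint (hence $|f^+|=l^2=k$ on every edge by Lemma~\ref{T2}) while the widely separated translates keep both $f$ and $f^+$ injective. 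With that filled in, your argument is complete and in fact proves more than the paper's converse does.
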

\begin{proof}\textup{
Let $G$ be a connected non-bipartite graph which admits a strongly $k$-uniform IASI. Then, by Corollary \ref{CSB}, $k$ must be a perfect square. Then, by Case 2 of Theorem \ref{M3}, since $G$ is connected, each vertex of $G$ must have a set-indexing number $l=\sqrt{k}$. Hence, this IASI of $G$ is $(k,l))$-completely uniform IASI. Conversely, assume that $G$ is $(k,l)$-completely uniform IASI where $l=\sqrt{k}$. Then each edge of $G$ has the set-indexing number $k=l^2$ and hence $G$ is strongly $k$-uniform IASI.}
\end{proof}

Invoking the above results, more generally we have

\begin{theorem}
A connected graph $G$ admits a strongly $k$-uniform IASI if and only if $G$ is bipartite or this IASI is a $(k,l)$-completely uniform IASI of $G$, where $k=l^2$.
\end{theorem}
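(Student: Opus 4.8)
The plan is to treat this as a clean case split based on whether $G$ is bipartite, combining the two characterizations already established. First I would prove the forward direction: suppose $G$ is connected and admits a strongly $k$-uniform IASI $f$. If $G$ is bipartite, there is nothing further to show --- the first alternative holds. If $G$ is non-bipartite, then Theorem \ref{TNB} applies directly and tells us that $k$ must be a perfect square and that $f$ is a $(k,l)$-completely uniform IASI with $l=\sqrt{k}$, which is precisely the second alternative. So the forward direction is essentially just invoking Theorem \ref{TNB} after disposing of the bipartite case trivially.

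For the converse I would argue as follows. Suppose first that $G$ is bipartite. Then by Theorem \ref{TBS}, $G$ admits a strongly $k$-uniform IASI for every positive integer $k$, so in particular for the $k$ in question. Now suppose instead that $G$ carries a $(k,l)$-completely uniform IASI with $k=l^2$. By definition of a completely uniform IASI (Definition \ref{D6}), every vertex has set-indexing number $l$ and every edge has set-indexing number $k$; since $k=l^2=l\cdot l$, the edge set-indexing number equals the product of the set-indexing numbers of its end vertices for every edge, so this IASI is strong, hence a strongly $k$-uniform IASI. In either case $G$ admits a strongly $k$-uniform IASI, completing the converse.

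I do not expect any genuine obstacle here: the theorem is a straightforward synthesis, with the non-bipartite half of the biconditional handed over wholesale to Theorem \ref{TNB} and the bipartite half handled by Theorem \ref{TBS}. The only point requiring a moment's care is making sure the two clauses of the disjunction are stated so that the forward direction's dichotomy (``bipartite or not'') lines up with them --- in particular, noting that a connected bipartite graph might \emph{also} happen to admit a completely uniform IASI when $k$ is a perfect square, so the two alternatives are not mutually exclusive, but that causes no difficulty for an ``if and only if'' phrased with ``or.'' A clean way to present it is: forward direction by cases on bipartiteness (trivial case plus Theorem \ref{TNB}); converse by cases on which alternative holds (Theorem \ref{TBS} plus a one-line check that $k=l^2$ makes a completely uniform IASI strong).
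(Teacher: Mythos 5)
Your proposal is correct and takes essentially the same route the paper intends: the paper states this theorem with no separate proof, presenting it as an immediate consequence (``invoking the above results'') of Theorem \ref{TBS} for the bipartite case and Theorem \ref{TNB} for the connected non-bipartite case, which is exactly the case split you carry out. The only detail the paper leaves implicit --- the one-line check that a $(k,l)$-completely uniform IASI with $k=l^2$ satisfies $|f^+(uv)|=k=l\cdot l=|f(u)|\,|f(v)|$ and is therefore strongly $k$-uniform --- you supply correctly.
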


\section{Conclusion}
In this paper, we have given some characteristics of the graphs which admit strong IASIs and strongly $k$-uniform IASIs. More properties and characteristics of weak and strong IASIs, both uniform and non-uniform, are yet to be investigated. More studies may be done in the field of IASI when the ground set $X$ is finite instead of the set $\mathbb{N}_0$. 

We have formulated some conditions for certain graphs to admit weakly and strongly uniform IASIs. The problems of establishing the necessary and sufficient conditions for various graphs and graph classes to admit these types of uniform IASIs have still been unsettled. Certain IASIs which assign set-labels having specific properties, to the elements of given graphs also seem to be promising for further investigations. All these facts highlight the wide scope for intensive studies in this area.

\end{document}